\documentclass[a4paper,12pt]{amsart}
\usepackage[
hmarginratio={1:1},     
vmarginratio={1:1},     
textwidth=15.8cm,        
textheight=21cm,			
heightrounded,          
]{geometry}        

\usepackage[cp1250]{inputenc}

\usepackage{euler}
\usepackage{graphicx}
\usepackage{lpic}
\usepackage{longtable}
\usepackage{url}
\usepackage{import}
\usepackage{amsfonts, amstext, amsmath, amsthm, amscd, amssymb}
\usepackage[sc]{mathpazo}
\usepackage{enumitem, textcomp, setspace}
\usepackage{caption}

\numberwithin{equation}{section}

\theoremstyle{plain}
\newtheorem{theorem}{Theorem}[section]
\newtheorem{proposition}[theorem]{Proposition}
\newtheorem{lemma}[theorem]{Lemma}

\theoremstyle{definition}
\newtheorem{definition}[theorem]{Definition}

\newtheorem{remark}[theorem]{Remark}

\usepackage[pagebackref=false]{hyperref}

\usepackage[all]{hypcap}

\definecolor{newblue}{rgb}{0.27, 0.32, 0.86}
\definecolor{newred}{rgb}{0.86, 0.32, 0.27}
\hypersetup{
	pagebackref=true,
	colorlinks=true,       
	linkcolor=newred,          
	citecolor=newblue,        
	filecolor=magenta,      
	urlcolor=newblue           
}

\usepackage[]{appendix}
\usepackage{wrapfig}

\providecommand{\subjclass}[1]{\textbf{\textit{2020 Mathematics Subject Classification:}} #1}

\graphicspath{ {fig/} } 

\title{A resistance invariant of special alternating links}
\author{Micha\l \;Jab\l onowski}
\address{Institute of Mathematics, Faculty of Mathematics, Physics and Informatics,\newline University of Gda\'nsk, 80-308 Gda\'nsk, Poland}

\email{michal.jablonowski@ug.edu.pl}

\date{\today}

\begin{document}

\subjclass[2020]{57K10 (primary), 05C50 (secondary)}

\maketitle

\begin{abstract}
We introduce a Laplacian trace invariant for special, reduced,
alternating diagrams of oriented knots and links and show that it is
determined by two consecutive coefficients at an extreme of the
Alexander polynomial.
The invariant also admits an interpretation as one half of the sum
of the directed resistances associated with the normalized balanced
Tait-graph Laplacian. Explicit flype-related examples show that the
Laplacian characteristic polynomial need not be preserved although the invariant is preserved.
\end{abstract}

\section{Introduction}

Alternating link diagrams occupy a central position in knot theory because their combinatorics is unusually rigid: diagrammatic data often controls classical invariants, and local moves can be globally constrained. In particular, the flyping theorem provides a strong form of
diagrammatic rigidity for prime alternating links: any two reduced alternating diagrams of the same oriented prime link are related by a sequence of flypes.
\par 
Our construction lives in the special alternating setting. Briefly, a special diagram is one in which the Seifert circles from Seifert’s algorithm bound disks that are not nested; this hypothesis is standard when one wants a clean interface between an oriented diagram and a checkerboard spanning surface.
\par 
Given a checkerboard shading of a connected link diagram, one obtains associated checkerboard graphs (often called Tait graphs) by placing vertices in regions and edges at crossings. These graphs and their planar duality encode substantial information, and they are a familiar bridge between knot theory and graph polynomials.
\par 
Our invariant $FP$ is not ``just spectral''. A flype changes the local placement of crossings inside a tangle; on the graph side, it alters Tait graphs by a local transformation that need not preserve the Laplacian up to conjugation. Consequently, the Laplacian characteristic polynomial (and hence the multiset of eigenvalues) can genuinely vary between flype-related diagrams. Our invariant gives a resistance interpretation of an Alexander-coefficient invariant for special alternating links.

\section{Definitions}\label{sDefinitions}

A \emph{diagram} $D$ of a (non-trivial) knot or link $KL$ is a generic projection of the knot or link in the plane, a regular $4$-valent graph, with extra information at each vertex indicating which arc of the link passes over the other. Moreover, it is \emph{alternating} when traveling around each link component on a diagram, we pass crossing alternately over and under (i.e., if one passes an over/under-crossing, the next crossing will be an under/over-crossing, see \cite{Men21} for a recent survey).
\par 
Throughout the paper, all diagrams are assumed to have connected underlying projection graphs. A diagram is called \emph{reduced} if it has no nugatory crossings., i.e. , a crossing whose neighborhood looks like the one in Figure \ref{rys15_1}, that can be removed without changing the knot type.

\begin{figure}[h!t]
	\begin{center}
		\includegraphics[width=4cm]{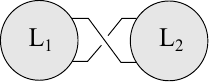}
		\caption{A nugatory crossing.\label{rys15_1}}
	\end{center}
\end{figure}

By a \emph{region} of a reduced diagram $D$ of a link $KL$ we mean a face of the underlying projection of the link. The diagram can be checkerboard-shaded so that every edge separates a shaded region (black region) from an unshaded one (white region).
\par
A standard operation of smoothing crossings of an oriented $D$, results in a collection of Seifert disks, possibly nested, in the plane that can be rejoined by half-twisted bands of $D$ to produce a Seifert surface for a link $KL$. The diagram $D$ is called \emph{special} if the Seifert disks are not nested, see \cite{SilWil19}. Any diagram $D$ can be deformed to a special diagram \cite{BurZie03}. From now on, we will always assume this condition on a knot or link diagram and that the shaded regions correspond to the Seifert surface.

\begin{figure}[h!t]
	\begin{center}
		\includegraphics[width=8cm]{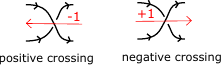}
		\caption{The convention for positive and negative crossings.\label{rys15_6}}
	\end{center}
\end{figure}

We consider a graph $\Gamma(D)$ associated with an oriented diagram $D$ as the Tait graph for unshaded checkerboard regions of $D$. The graph $\Gamma(D)$ has vertices corresponding to the unshaded regions of $D$, and an oriented edge between two vertices for every crossing at which the corresponding regions meet.
\par
On each edge $e$ of the graph $\Gamma(D)$, we assign its weight $wg(e)$ as $-1$ when the corresponding crossing is a positive, and $+1$ if it is a negative crossing. See Figure \ref{rys15_6} for the convention of the orientation of an edge and its weight (marked in red).

\par
The Laplacian matrix $L(\Gamma)$ of a directed graph $\Gamma$ is the square matrix indexed by the vertex set of $\Gamma$ with diagonal entries equal to the weighted out-degree of the vertex $v_i$, i.e., the sum of weights of edges with initial vertex $v_i$, and non-diagonal entries $L(\Gamma)_{i,j}$ equal to $-1$ times the sum of the weights of edges from $v_i$ to $v_j$.
\par
The Alexander polynomial of a knot or link $KL$ can be calculated
from the Laplacian matrix associated with a special diagram as
follows. Let $v_{\infty}$ denote the vertex corresponding to the
unbounded unshaded region. By \cite{SilWil19}, the principal
submatrix $S$ obtained from $L$ by deleting the row and column
corresponding to $v_{\infty}$ is a Seifert matrix for $KL$.
Consequently,
$
P_D(t)=\det(S-tS^T)
$
is a polynomial representative of the one-variable Alexander
polynomial, well defined up to multiplication by $\pm t^k$.

For the directed Tait graph of a special diagram, the weighted
in-degree and weighted out-degree agree at every vertex
\cite{SilWil19}. Hence
$
L\mathbf{1}=0,
\;
L^T\mathbf{1}=0.
$
For a special alternating diagram all crossings have the same sign,
and hence all edge weights are equal to a common value
$\omega\in\{-1,+1\}$; see \cite{Sto05}.
After multiplying $L$ by this common sign, one obtains the
Laplacian of a connected balanced directed graph. A connected balanced
directed graph is strongly connected, and hence its Laplacian has
rank $n-1$. Therefore, 
$
\operatorname{rank}(L)=n-1.
$

Let us define the new ``resistance'' invariant.

\begin{definition}
	For a special, reduced, alternating diagram $D$, define 
	$$FP(D)=\operatorname{tr}(L^TL^+),$$ where $L=L(\Gamma(D))$ and $L^+$ is the Moore–Penrose pseudoinverse of $L$. 
\end{definition}

We have the main theorem of this paper, as follows.

\begin{theorem}\label{thm:main}
	For connected, special, reduced, alternating diagrams $D_1$, $D_2$ of the same oriented knot or link $KL$, we have $FP(D_1)=FP(D_2)$.
\end{theorem}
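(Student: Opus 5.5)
The plan is to show that $FP(D)$ equals the ratio of the two lowest coefficients of $P_D(t)=\det(S-tS^T)$, and then to argue that this ratio is a link invariant for special alternating diagrams. The argument has three steps: identify the pseudoinverse, expand $P_D(t)$ near $t=0$, and check invariance.

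\emph{Step 1: reduce $FP$ to the grounded matrix $S$.} Order the vertices so that $v_\infty$ comes last, and write
$$G=\begin{pmatrix} S^{-1} & 0\\ 0 & 0\end{pmatrix}.$$
First I need $S$ to be invertible. Since $\operatorname{rank}(L)=n-1$ with $L\mathbf 1=0$, the matrix-tree argument for balanced strongly connected digraphs gives that every principal $(n-1)$-minor is nonzero. Alternatively, $\det S\neq 0$ because it is the leading coefficient of the Alexander polynomial of a special alternating link.

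Next, using $L\mathbf 1=L^T\mathbf 1=0$, I check directly that $LGL=L$. Let $\Pi=I-\tfrac1n J$ be the orthogonal projection onto $\mathbf 1^\perp$. Since $\ker L=\ker L^T=\langle \mathbf 1\rangle$, the Moore–Penrose inverse is $L^+=\Pi G\Pi$. To confirm this, I verify the four Penrose identities using $\Pi L=L\Pi=L$ and $LGL=L$. Then cyclicity of the trace and $\Pi L^T=L^T\Pi=L^T$ give
$$FP(D)=\operatorname{tr}(L^T\Pi G\Pi)=\operatorname{tr}(\Pi L^T\Pi\, G)=\operatorname{tr}(L^TG)=\operatorname{tr}(S^TS^{-1}).$$

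\emph{Step 2: Alexander coefficients.} Write $m=n-1$. Then
$$P_D(t)=\det S\cdot\det(I-tS^{-1}S^T)=\det S\,\bigl(1-t\operatorname{tr}(S^{-1}S^T)+O(t^2)\bigr).$$
So if $P_D(t)=a_0+a_1t+\dots+a_mt^m$, then $a_0=\det S\neq0$ and
$$FP(D)=-\,a_1/a_0.$$
The top coefficient is $a_m=(-1)^m\det S^T\neq0$, so $P_D$ has span exactly $m$. By the symmetry $t\mapsto t^{-1}$, the same ratio appears, up to the sign conventions, at the top end. Hence the quantity does not depend on which extreme is used.

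\emph{Step 3: invariance.} Let $D_1,D_2$ be two such diagrams. Their polynomials $P_{D_1}$ and $P_{D_2}$ agree up to a factor $\pm t^k$, so the ratio of the lowest nonzero coefficient to the next one is the same for both. The delicate point is that $a_1$ must be the coefficient adjacent to the lowest nonzero one in the actual normalized Alexander polynomial, with no hidden shift. This holds because Step 2 shows $a_0\neq0$ and $a_m\neq0$, so $P_D$ is a genuine polynomial of span $m$ whose lowest term is $a_0$. Therefore
$$FP(D_1)=-a_1/a_0=FP(D_2).$$
Note that this argument never needs $n$ to be the same for both diagrams, since the span is read off from the invariant polynomial itself. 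By Murasugi–Crowell the span is in fact $2g+\mu-1$ in both cases, but this is not used.

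The main obstacle I expect is Step 1: justifying that $\Pi G\Pi$ really is the Moore–Penrose inverse, and in particular handling the invertibility of $S$ cleanly. I would first try to derive invertibility from the rank statement recalled before the definition: for a balanced Laplacian, the adjugate is a multiple of $\mathbf 1\mathbf 1^T$, and it is nonzero because the rank is $n-1$. This forces every principal cofactor, and hence $\det S$, to be nonzero.
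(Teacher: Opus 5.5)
Your proposal is correct and matches the paper's proof step for step. The steps are the same: the adjugate argument gives $\det S\neq 0$; your $\Pi G\Pi$ is the paper's $CH^{-1}S^{-1}H^{-1}C^T$, because $\Pi\begin{pmatrix} I_{n-1}\\ 0\end{pmatrix}=CH^{-1}$, which yields $FP(D)=\operatorname{tr}(S^TS^{-1})$; and the first-order expansion of $\det(S-tS^T)$ is the paper's Jacobi-formula step. The only differences are cosmetic: you note that the ratio of the two lowest coefficients is unchanged by $\pm t^k$, where the paper uses $a_0\neq 0$ to force $k=0$, and the Penrose check is quickest as $L^+=L^+LGLL^+=\Pi G\Pi$.
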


\begin{remark}
	The name ``resistance'' for the invariant $FP$ comes from the
	following interpretation. If $\omega\in\{-1,+1\}$ is the common
	edge weight of the directed Tait graph and
	$
	\widehat L=\omega L,
	$
	then $\widehat L$ is the corresponding Laplacian with positive unit
	edge weights. If $\rho(e)$ denotes the resistance of a directed edge
	defined from $\widehat L^+$ in the sense of \cite{BBG23}, then
	$
	FP(D)=\frac{1}{2}\sum_e \rho(e).
	$
	Equivalently, if $r(e)$ is defined directly from the signed
	pseudoinverse $L^+$, then
	$
	FP(D)=\frac{\omega}{2}\sum_e r(e).
	$
	This relation will be proved in Section~\ref{sProof}.
	
	If the transpose is omitted from the definition of $FP$, then the
	expression trivializes, since
	$
	\operatorname{tr}(LL^+)=\operatorname{rank}(L).
	$
\end{remark}

\section{Examples}\label{sExamples}

	\begin{figure}[h!t]
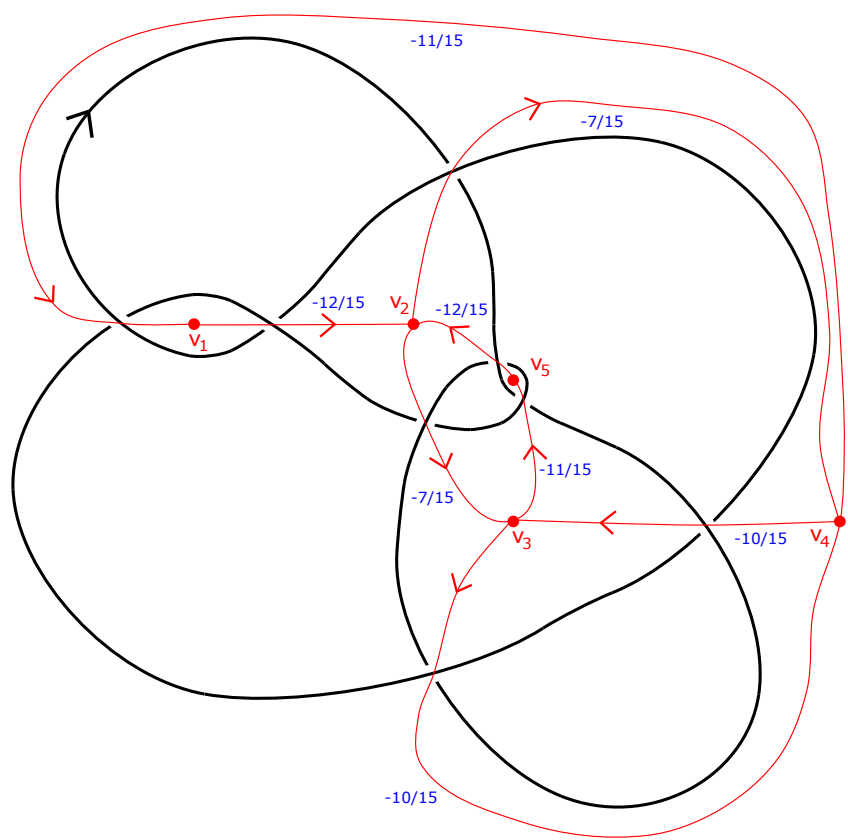

	\begin{center}
		\begin{lpic}[]{K8a2a(12cm)}
		\end{lpic}
		\caption{The diagram $8a2A$.\label{K8a2a}}
	\end{center}
\end{figure}

\begin{figure}[h!t]
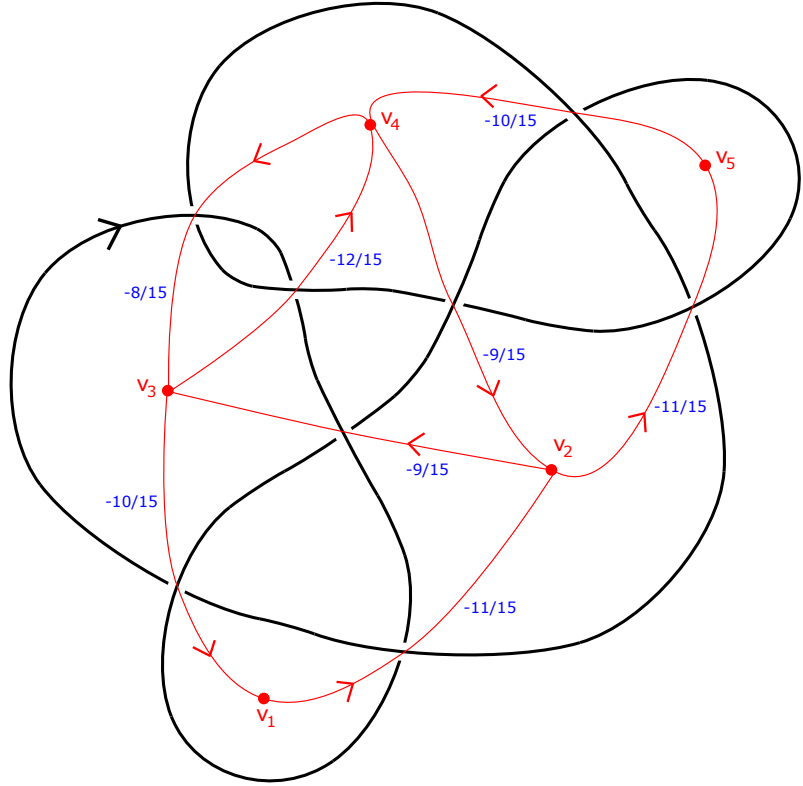

	\begin{center}
		\begin{lpic}[]{K8a2b(12cm)}
		\end{lpic}
		\caption{The diagram $8a2B$.\label{K8a2b}}
	\end{center}
\end{figure}

In Figures~\ref{K8a2a}--\ref{K8a2b} we show diagrams $8a2A$ and
$8a2B$ of the knot $8a2$, together with their oriented Tait graphs
(in red) and the directed resistances $\rho(e)$ associated with the
normalized Laplacian $\widehat L$ (in blue). The Laplacians for the graphs are as follows.

$
L(\Gamma(8a2A))=
\begin{pmatrix}
	-1 & 1 & 0 & 0 & 0 \\
	0 & -2 & 1 & 1 & 0 \\
	0 & 0 & -2 & 1 & 1 \\
	1 & 0 & 1 & -2 & 0 \\
	0 & 1 & 0 & 0 & -1
\end{pmatrix}
$
$
L(\Gamma(8a2B))=
\begin{pmatrix}
	-1 & 1 & 0 & 0 & 0 \\
	0 & -2 & 1 & 0 & 1 \\
	1 & 0 & -2 & 1 & 0 \\
	0 & 1 & 1 & -2 & 0 \\
	0 & 0 & 0 & 1 & -1
\end{pmatrix}
$

The characteristic polynomials of the Laplacians are as follows. We use the convention
$
\chi_L(\lambda)=\det(L-\lambda I).
$
$$\chi(L(\Gamma(8a2A)))=-\lambda^5-8\lambda^4-24\lambda^3-32\lambda^2-15\lambda$$
$$\chi(L(\Gamma(8a2B)))=-\lambda^5-8\lambda^4-24\lambda^3-31\lambda^2-15\lambda$$

We see that the polynomials are not the same (their coefficients of $\lambda^2$ differ by $1$), so the set of eigenvalues for $L(\Gamma(D))$ alone cannot be an invariant of the link with a special alternating diagram $D$. Let us calculate the invariant $FP$ for these diagrams.
\\ 
\par 
\noindent
$\displaystyle FP(8a2A)=\frac{1}{75}\operatorname{tr}(\begin{pmatrix}
	-1 & 1 & 0 & 0 & 0 \\
	0 & -2 & 1 & 1 & 0 \\
	0 & 0 & -2 & 1 & 1 \\
	1 & 0 & 1 & -2 & 0 \\
	0 & 1 & 0 & 0 & -1
\end{pmatrix}^T\begin{pmatrix}
	-48 & -3 & 12 & 12 & 27 \\
	12 & -18 & -3 & -3 & 12 \\
	17 & 12 & -23 & 2 & -8 \\
	-8 & 12 & 2 & -23 & 17 \\
	27 & -3 & 12 & 12 & -48
\end{pmatrix})=\frac{8}{3}$
\\ 
\ 
\\ 
\noindent
$\displaystyle FP(8a2B)=\frac{1}{75}\operatorname{tr}(\begin{pmatrix}
	-1 & 1 & 0 & 0 & 0 \\
	0 & -2 & 1 & 0 & 1 \\
	1 & 0 & -2 & 1 & 0 \\
	0 & 1 & 1 & -2 & 0 \\
	0 & 0 & 0 & 1 & -1
\end{pmatrix}^T\begin{pmatrix}
	-44 & -4 & 16 & 21 & 11 \\
	16 & -19 & 1 & 6 & -4 \\
	-9 & 6 & -24 & 6 & 21 \\
	11 & 1 & -4 & -24 & 16 \\
	26 & 16 & 11 & -9 & -44
\end{pmatrix})=\frac{8}{3}$
\\

\section{Proof of the main theorem and the resistance interpretation}
\label{sProof}

We first express the invariant $FP$ directly in terms of a Seifert
matrix. This will give a proof of Theorem~\ref{thm:main} which does
not require an analysis of flype moves.

Let $D$ be a special, reduced, alternating diagram, let
$\Gamma=\Gamma(D)$ be its oriented Tait graph, and let
$L=L(\Gamma)$ be the corresponding $n\times n$ Laplacian matrix.
Let $\mathbf{1}_k$ denote the column vector in $\mathbb{R}^k$ all of
whose entries are equal to $1$.

By the definition of the Laplacian,
$
L\mathbf{1}_n=0.
$
Moreover, the directed Tait graph occurring in the present
construction is balanced, so that
$
L^T\mathbf{1}_n=0.
$
We also use the fact recalled in Section~2 that
$
\operatorname{rank}(L)=n-1.
$

Let $v_{\infty}$ be the vertex corresponding to the unbounded
unshaded region. Relabel the vertices so that $v_n=v_{\infty}$, and
let $S$ be the $(n-1)\times(n-1)$ principal submatrix obtained from
$L$ by deleting its last row and last column. By
\cite[Theorem~1.1(i)]{SilWil19}, $S$ is a Seifert matrix associated
with $D$.

\begin{lemma}
	\label{lem:factorization}
	The matrix $S$ is nonsingular. Moreover, if
	$
	C=
	\begin{pmatrix}
		I_{n-1}\\
		-\mathbf{1}_{n-1}^T
	\end{pmatrix},
	$
	then
	$
	L=CSC^T.
	$
\end{lemma}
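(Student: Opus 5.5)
Write $L$ in block form with respect to the splitting of the vertex set into $\{v_1,\dots,v_{n-1}\}$ and $\{v_n\}$:
$$
L=\begin{pmatrix} S & b\\ c^T & d\end{pmatrix},
$$
where $b,c\in\mathbb{R}^{n-1}$ and $d\in\mathbb{R}$. The plan is to use the two vector identities $L\mathbf{1}_n=0$ and $L^T\mathbf{1}_n=0$ to express $b$, $c$ and $d$ in terms of $S$, which gives the factorization. Nonsingularity of $S$ will then follow from the rank of $L$.

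\emph{Step 1 (the border of $L$).} Reading $L\mathbf{1}_n=0$ blockwise gives
$$
S\mathbf{1}_{n-1}+b=0 \quad\text{and}\quad c^T\mathbf{1}_{n-1}+d=0,
$$
so $b=-S\mathbf{1}_{n-1}$. Reading $L^T\mathbf{1}_n=0$ blockwise gives $S^T\mathbf{1}_{n-1}+c=0$, so $c^T=-\mathbf{1}_{n-1}^TS$. Substituting, $d=-c^T\mathbf{1}_{n-1}=\mathbf{1}_{n-1}^TS\mathbf{1}_{n-1}$.

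\emph{Step 2 (the factorization).} Compute directly:
$$
CSC^T=\begin{pmatrix} I\\ -\mathbf{1}^T\end{pmatrix} S\begin{pmatrix} I & -\mathbf{1}\end{pmatrix}
=\begin{pmatrix} S & -S\mathbf{1}\\ -\mathbf{1}^TS & \mathbf{1}^TS\mathbf{1}\end{pmatrix}.
$$
By Step 1 this is exactly $L$.

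\emph{Step 3 (nonsingularity).} The matrix $C$ is $n\times(n-1)$ and has full column rank, since it contains $I_{n-1}$ as a block. Hence $C$ is injective and $C^T$ is surjective. It follows that
$$
\operatorname{rank}(CSC^T)=\operatorname{rank}(S).
$$
Since $\operatorname{rank}(L)=n-1$, as recalled in Section~2, we get $\operatorname{rank}(S)=n-1$, so $S$ is invertible.

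As a cross-check, $\det S$ is the leading-type value $P_D(0)$ of the Alexander polynomial up to sign, and it is nonzero for a special alternating diagram; the rank argument above is the self-contained route.

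The only point needing care is that the rank statement $\operatorname{rank}(L)=n-1$ depends on balancedness and strong connectivity after rescaling by $\omega$. This is taken as given from Section~2. Everything else is routine block algebra, so I expect no real obstacle.
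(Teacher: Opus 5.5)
Your proof is correct. Steps 1--2, which read off the border of $L$ from $L\mathbf{1}_n=0$ and $L^T\mathbf{1}_n=0$ and multiply out $CSC^T$, are exactly what the paper does.

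The difference lies in how you get nonsingularity of $S$. The paper proves it first, independently of the factorization, with an adjugate argument. Since $\ker L=\ker L^T=\operatorname{span}\{\mathbf{1}_n\}$ and $\operatorname{adj}(L)\neq 0$ when $\operatorname{rank}(L)=n-1$, the identities $L\operatorname{adj}(L)=\operatorname{adj}(L)L=0$ force $\operatorname{adj}(L)=c\,\mathbf{1}_n\mathbf{1}_n^T$ with $c\neq 0$. Hence every principal cofactor, in particular $\det S$, equals $c\neq 0$. You reverse the order and derive nonsingularity as a corollary of the factorization: since $C$ is injective and $C^T$ is surjective, $\operatorname{rank}(S)=\operatorname{rank}(CSC^T)=\operatorname{rank}(L)=n-1$.

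Your route is more elementary, avoiding the adjugate. It also shows that, given the border identities, $\operatorname{rank}(L)=n-1$ is equivalent to $\det S\neq 0$. The paper's route gives extra, matrix-tree-type information: all cofactors of $L$ coincide, so $\det S$ does not depend on which vertex is deleted. That is not needed for the lemma.

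Your closing cross-check via the Alexander polynomial is unnecessary and would import an external fact about alternating links. You are right to rest the argument on the rank computation.
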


\begin{proof}
	Since
	$
	L\mathbf{1}_n=0
	\;\text{and}\;
	L^T\mathbf{1}_n=0,
	$
	and $\operatorname{rank}(L)=n-1$, we have
	$
	\ker L=\ker L^T=\operatorname{span}\{\mathbf{1}_n\}.
	$
	Because $\operatorname{rank}(L)=n-1$, the adjugate
	$\operatorname{adj}(L)$ is nonzero. From
	$
	L\operatorname{adj}(L)
	=
	\operatorname{adj}(L)L
	=
	0
	$
	it follows that every column and every row of
	$\operatorname{adj}(L)$ is a multiple of $\mathbf{1}_n$. Hence
	$
	\operatorname{adj}(L)=c\,\mathbf{1}_n\mathbf{1}_n^T
	$
	for some $c\neq0$. In particular, every principal cofactor of $L$
	is nonzero. Therefore
	$
	\det S\neq0.
	$
	
	Now write $L$ in block form
	$
	L=
	\begin{pmatrix}
		S & u\\
		v^T & \alpha
	\end{pmatrix}.
	$
	The equalities $L\mathbf{1}_n=0$ and
	$L^T\mathbf{1}_n=0$ imply
	$
	u=-S\mathbf{1}_{n-1},
	\;
	v^T=-\mathbf{1}_{n-1}^T S,
	$
	and consequently
	$
	\alpha=\mathbf{1}_{n-1}^T
	S\mathbf{1}_{n-1}.
	$
	Thus
	$
	L=
	\begin{pmatrix}
		S & -S\mathbf{1}_{n-1}\\
		-\mathbf{1}_{n-1}^TS &
		\mathbf{1}_{n-1}^TS\mathbf{1}_{n-1}
	\end{pmatrix}
	=
	CSC^T.
	$
\end{proof}

\begin{lemma}
	\label{lem:pseudoinverse}
	With the notation above,
	$
	FP(D)=\operatorname{tr}(S^T S^{-1}).
	$
\end{lemma}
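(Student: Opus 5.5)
The plan is to write down $L^+$ explicitly in terms of $S^{-1}$ using the factorization $L=CSC^T$ of Lemma~\ref{lem:factorization}, and then evaluate the trace by cyclicity. Put $P=I_n-\frac1n\mathbf{1}_n\mathbf{1}_n^T$, the orthogonal projection onto $\mathbf{1}_n^\perp$. By the proof of Lemma~\ref{lem:factorization}, $\ker L=\ker L^T=\operatorname{span}\{\mathbf{1}_n\}$, so both the column space and the row space of $L$ equal $\mathbf{1}_n^\perp$. Hence the Moore--Penrose pseudoinverse is the unique matrix $X$ with $XL=LX=P$ and $X\mathbf{1}_n=X^T\mathbf{1}_n=0$; equivalently, $X$ inverts $L$ on $\mathbf{1}_n^\perp$ and vanishes on $\mathbf{1}_n$.

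The candidate is $L^+=PBS^{-1}B^TP$, where $B=\begin{pmatrix} I_{n-1}\\ 0\end{pmatrix}$. The column space of $C$ is $\mathbf{1}_n^\perp$, because $\mathbf{1}_n^TC=0$ and $C$ has rank $n-1$. From $B^TC=I_{n-1}$ and $PC=C$ we get $B^TPC=B^TC=I_{n-1}$, so $G:=B^TP$ is a left inverse of $C$ and satisfies $G\mathbf{1}_n=0$. Then:
\begin{itemize}
\item $L\,G^TS^{-1}G=CSC^TG^TS^{-1}G=CSS^{-1}G=CG$;
\item $CG$ is the identity on $\operatorname{col}(C)=\mathbf{1}_n^\perp$ and kills $\mathbf{1}_n$, so $CG=P$;
\item transposing the same argument gives $G^TS^{-1}GL=P$.
\end{itemize}
Together with $G\mathbf{1}_n=0$, this shows that $L^+=G^TS^{-1}G$. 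Writing it as $L^+=G^TS^{-1}G$ rather than in the form above is the cleaner choice: only $CG=P$ needs to be checked.

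For the trace,
$$FP(D)=\operatorname{tr}(L^TL^+)=\operatorname{tr}(CS^TC^TG^TS^{-1}G).$$
Using cyclicity and $C^TG^T=(GC)^T=I_{n-1}$, this equals $\operatorname{tr}(S^TS^{-1}GC)=\operatorname{tr}(S^TS^{-1})$.

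The main obstacle is confirming that $CG=P$ exactly, which is what makes the candidate the Moore--Penrose pseudoinverse and not merely some generalized inverse. It follows from $CG\mathbf{1}_n=0$ together with $CGC=C$. A second point is that the balance condition $L^T\mathbf{1}_n=0$ is essential: it forces the row space of $L$ to be $\mathbf{1}_n^\perp$, so the same $P$ appears on both sides.
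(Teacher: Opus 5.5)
Your proof is correct and follows essentially the same route as the paper: you build an explicit formula for $L^+$ from the factorization $L=CSC^T$, check it via $LX=XL=$ the orthogonal projection onto $\mathbf{1}_n^\perp$, and then finish with a cyclic trace computation. Your left inverse $G=B^TP$ coincides with the paper's $H^{-1}C^T=(C^TC)^{-1}C^T$ (both are the Moore--Penrose pseudoinverse of $C$), so your $G^TS^{-1}G$ is literally the paper's $CH^{-1}S^{-1}H^{-1}C^T$.
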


\begin{proof}
	Put
	$
	H=C^TC.
	$
	Since $C$ has full column rank, $H$ is invertible. Define
	$
	X=
	CH^{-1}S^{-1}H^{-1}C^T.
	$
	Using $L=CSC^T$ and $C^TC=H$, we obtain
	$$
	LX
	=
	CSC^TCH^{-1}S^{-1}H^{-1}C^T
	=
	CH^{-1}C^T,
	$$
	and similarly
	$
	XL=CH^{-1}C^T.
	$
	The matrix
	$
	P=CH^{-1}C^T
	$
	is the orthogonal projection onto the column space of $C$, which is
	$\mathbf{1}_n^\perp$. Hence, $P=P^T$, and
	$
	PL=L,
	\;
	LP=L.
	$
	It follows that
	$
	LXL=L,
	\;
	XLX=X,
	$
	while both $LX$ and $XL$ are symmetric. Thus, $X$ satisfies the four
	Moore--Penrose equations, and therefore
	$
	L^+=CH^{-1}S^{-1}H^{-1}C^T.
	$
	
	Consequently,
	\begin{align*}
		FP(D)
		&=\operatorname{tr}(L^T L^+)\\
		&=\operatorname{tr}
		\left(
		CS^TC^T
		CH^{-1}S^{-1}H^{-1}C^T
		\right)\\
		&=\operatorname{tr}
		\left(
		CS^TS^{-1}H^{-1}C^T
		\right)\\
		&=\operatorname{tr}
		\left(
		S^TS^{-1}H^{-1}C^TC
		\right)\\
		&=\operatorname{tr}(S^TS^{-1}),
	\end{align*}
	as claimed.
\end{proof}

We can now relate $FP(D)$ directly to the Alexander polynomial.
Set
$
P_D(t)=\det(S-tS^T).
$
By the Seifert-matrix construction recalled in Section~2,
$P_D(t)$ is a polynomial representative of the Alexander polynomial
of the oriented link represented by $D$; see \cite{SilWil19}.

\begin{proposition}
	\label{prop:alexanderFP}
	For every connected, special, reduced, alternating diagram $D$,
	$
	FP(D)
	=
	-\frac{P_D'(0)}{P_D(0)}.
	$
	In particular, if
	$
	P_D(t)=a_0+a_1t+\cdots+a_mt^m,
	\; a_0\neq0,
	$
	then
	$
	FP(D)=-\frac{a_1}{a_0}.
	$
\end{proposition}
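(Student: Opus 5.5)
We combine Lemma~\ref{lem:pseudoinverse} with Jacobi's formula for the derivative of a determinant. By Lemma~\ref{lem:pseudoinverse}, $FP(D)=\operatorname{tr}(S^TS^{-1})$, where $S$ is the Seifert matrix obtained by deleting the row and column of $v_\infty$. By Lemma~\ref{lem:factorization}, $S$ is nonsingular. Hence
\[
P_D(0)=\det(S)\neq 0 ,
\]
which gives $a_0\neq 0$ automatically and makes the quotient $-P_D'(0)/P_D(0)$ well defined.

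Next we differentiate. Put $M(t)=S-tS^T$, so that $M'(t)=-S^T$. For $t$ near $0$ the matrix $M(t)$ is invertible, since $\det M(0)\neq0$ and the determinant is continuous. Jacobi's formula
\[
\frac{d}{dt}\det M(t)=\det M(t)\,\operatorname{tr}\bigl(M(t)^{-1}M'(t)\bigr)
\]
evaluated at $t=0$ gives
\[
P_D'(0)=\det(S)\,\operatorname{tr}\bigl(-S^{-1}S^T\bigr)=-P_D(0)\,\operatorname{tr}(S^{-1}S^T).
\]
By cyclicity of the trace, $\operatorname{tr}(S^{-1}S^T)=\operatorname{tr}(S^TS^{-1})=FP(D)$. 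Dividing by $P_D(0)\neq0$ yields $FP(D)=-P_D'(0)/P_D(0)$.

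For the coefficient form, expand $P_D(t)=a_0+a_1t+\cdots+a_mt^m$. Then $P_D(0)=a_0$ and $P_D'(0)=a_1$, so $FP(D)=-a_1/a_0$.

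No step here is a real obstacle; the only care needed is nonvanishing of $\det S$, which Lemma~\ref{lem:factorization} supplies. As a remark for Theorem~\ref{thm:main}, the ratio $-a_1/a_0$ is not affected by the ambiguity $\pm t^k$ in the Alexander polynomial, once one reads it as the ratio of the two lowest nonzero coefficients, i.e.\ at an extreme of the polynomial. That part belongs to the deduction of Theorem~\ref{thm:main}, not to this proposition.
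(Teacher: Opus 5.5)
Your proof is correct and follows the paper's argument: $\det S\neq0$ from Lemma~\ref{lem:factorization}, Jacobi's formula for $\det(S-tS^T)$ at $t=0$, cyclicity of the trace, and Lemma~\ref{lem:pseudoinverse}. In your closing remark, the precise form of the quantity invariant under $\pm t^k$ is the lowest coefficient together with the coefficient one degree above it, rather than ``the two lowest nonzero coefficients''; as you say, this concerns Theorem~\ref{thm:main} and not the proposition.
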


\begin{proof}
	Since $S$ is nonsingular,
	$
	P_D(0)=\det S\neq0.
	$
	Jacobi's formula for the derivative of a determinant gives
	\begin{align*}
		P_D'(0)
		&=
		\det(S)\,
		\operatorname{tr}
		\left(
		S^{-1}(-S^T)
		\right)\\
		&=
		-\det(S)\,
		\operatorname{tr}(S^{-1}S^T).
	\end{align*}
	By the cyclicity of the trace,
	$
	\operatorname{tr}(S^{-1}S^T)
	=
	\operatorname{tr}(S^TS^{-1}),
	$
	and Lemma~\ref{lem:pseudoinverse} therefore gives
	$
	P_D'(0)
	=
	-P_D(0)\,FP(D).
	$
	Hence
	$
	FP(D)
	=
	-\frac{P_D'(0)}{P_D(0)}.
	$
	The second assertion follows immediately from
	$
	P_D(0)=a_0,
	\;
	P_D'(0)=a_1.
	$
\end{proof}

\begin{proof}[Proof of Theorem~\ref{thm:main}]
	Let $D_1$ and $D_2$ be connected, special, reduced, alternating diagrams of the
	same oriented knot or link, and let
	$
	P_{D_i}(t)=\det(S_i-tS_i^T),
	\; i=1,2,
	$
	where $S_i$ is the principal submatrix obtained by deleting the row and
	column corresponding to the unbounded unshaded region.
	
	Since $P_{D_1}$ and $P_{D_2}$ represent the Alexander polynomial of
	the same oriented link, there is an integer $k$ such that
	$
	P_{D_2}(t)=\pm t^k P_{D_1}(t).
	$
	By Lemma~\ref{lem:factorization},
	$
	P_{D_i}(0)=\det S_i\neq0
	\; (i=1,2).
	$
	Both $P_{D_1}(t)$ and $P_{D_2}(t)$ are ordinary polynomials with
	nonzero constant term. Therefore, the equality above is possible
	only for $k=0$. Hence
	$
	P_{D_2}(t)=\pm P_{D_1}(t).
	$
	The sign disappears in the logarithmic derivative, and
	Proposition~\ref{prop:alexanderFP} gives
	$
	FP(D_2)
	=
	-\frac{P_{D_2}'(0)}{P_{D_2}(0)}
	=
	-\frac{P_{D_1}'(0)}{P_{D_1}(0)}
	=
	FP(D_1).
	$
	This proves the theorem.
\end{proof}

We finish by explaining the resistance interpretation of $FP$.

For a special alternating diagram, all crossings have the same sign,
and hence all edge weights of $\Gamma(D)$ are equal to a common
constant
$
\omega\in\{-1,+1\};
$
see \cite{Sto05}. 
Define the normalized Laplacian
$
\widehat L=\omega L.
$
If $m_{ij}>0$ directed edges have initial vertex $v_i$ and
terminal vertex $v_j$, replace them by a single directed edge
with scalar weight
$
W_{ij}=\frac{1}{m_{ij}}.
$
With the convention of \cite{BBG23}, the corresponding
off-diagonal Laplacian entry is
$
-W_{ij}^{-1}=-m_{ij}.
$
Thus the resulting weighted directed graph has Laplacian
$\widehat L$.

The underlying undirected graph is connected. Moreover,
$
\widehat L\mathbf{1}_n=\widehat L^T\mathbf{1}_n=0,
$
so the resulting weighted directed graph is balanced. A connected
balanced directed graph is strongly connected. Thus the weighted
resistance theory of \cite{BBG23} applies to $\widehat L$.
Since $\omega^2=1$,
$
\widehat L^+=\omega L^+
$
and consequently
$
\operatorname{tr}(\widehat L^T\widehat L^+)
=
\operatorname{tr}(L^TL^+)
=
FP(D).
$

For vertices $v_i,v_j$, define
$
\rho_{ij}
=
(\widehat L^+)_{ii}
+
(\widehat L^+)_{jj}
-
2(\widehat L^+)_{ij}.
$
This is the resistance associated with the balanced directed
Laplacian $\widehat L$ in the sense of \cite{BBG23}. Let
$\mathcal R=[\rho_{ij}]$ and put
$
d=\operatorname{diag}(\widehat L^+).
$
Then
$
\mathcal R
=
d\mathbf{1}_n^T
+
\mathbf{1}_n d^T
-
2\widehat L^+.
$
Since
$
\widehat L\mathbf{1}_n=0,
\;
\widehat L^T\mathbf{1}_n=0,
$
we obtain
\begin{align*}
	\operatorname{tr}(\widehat L^T\mathcal R)
	&=
	-2\operatorname{tr}
	(\widehat L^T\widehat L^+)
	=
	-2FP(D).
\end{align*}

Let $m_{ij}$ denote the number of directed edges from $v_i$ to
$v_j$. For $i\neq j$,
$
\widehat L_{ij}=-m_{ij},
$
and $\rho_{ii}=0$. Therefore
\begin{align*}
	\operatorname{tr}(\widehat L^T\mathcal R)
	&=
	\sum_{i,j}\widehat L_{ij}\rho_{ij}
	=
	-\sum_{i\neq j}m_{ij}\rho_{ij}
	=
	-\sum_e \rho(e),
\end{align*}
where the last sum is taken over all directed edges of $\Gamma(D)$
and $\rho(e)$ denotes the resistance between the initial and terminal
vertices of $e$. Comparing the two expressions gives
$
	FP(D)=\frac{1}{2}\sum_e \rho(e).
$

Equivalently, if one defines the signed quantities
$
r_{ij}
=
(L^+)_{ii}
+
(L^+)_{jj}
-
2(L^+)_{ij},
$
then
$
\rho_{ij}=\omega r_{ij},
$
and hence
$
FP(D)=\frac{\omega}{2}\sum_e r(e).
$


\end{document}